\title{Triangles with two given integral sides}
\author{Sz. Tengely}
\thanks{Research supported in part by the Magyary Zolt\'an Higher Educational Public Foundation}
\address{Mathematical Institute\newline
 \indent University of Debrecen\newline
 \indent P.O.Box 12\newline
 \indent 4010 Debrecen\newline
 \indent Hungary}
\email{tengely@math.klte.hu}
\keywords{Diophantine equations}
\subjclass[2000]{Primary 11D61; Secondary 11Y50}
\begin{document}
\newcommand{\Bound}{\mbox{Bound}}
\newcommand{\NE}{\mbox{NumofEq}}
\newcommand{\Root}{\mbox{Root}}
\newcommand{\pol}{\mbox{pol}}
\newcommand{\Red}{\mbox{Reduction}}
\newcommand{\ord}{\mbox{\rm ord}}
\newcommand{\lcm}{\mbox{\rm lcm}}
\newcommand{\sign}{\mbox{\rm sign}}
\newcommand{\ggd}{\mbox{\rm ggd}}

\newtheorem{thm}{Theorem}
\newtheorem{lem}{Lemma}
\newtheorem*{cor}{Corollary}
\newtheorem*{thm1}{Theorem}
\newtheorem*{lem1}{Lemma}
\newtheorem*{conj1}{Conjecture}
\theoremstyle{definition}
\newtheorem*{rem}{Remark}
\newtheorem*{acknowledgement}{Acknowledgement}
\bibliographystyle{plain}
\maketitle

\begin{abstract}
We study some Diophantine problems related to triangles with two given integral sides. We solve two problems posed by Zolt\'an Bertalan and we also provide some generalization.
\end{abstract}

%{\huge\clock{14}{30}}
\section{introduction}
There are many Diophantine problems arising from studying certain properties of triangles. Most people know the theorem on the lengths of sides of right angled triangles named after Pythagoras. That is $a^2+b^2=c^2.$ 

An integer $n\geq 1$ is called congruent if it is the area of a right triangle
with rational sides. Using tools from modern arithmetic theory of elliptic curves and modular forms 
Tunnell \cite{Tunnell} found necessary condition for $n$ to be a congruent number.
Suppose that $n$ is a square–free positive integer which is a congruent number. 
\begin{itemize}
\item[(a)] If $n$ is odd, then the number of integer triples $(x, y, z)$ satisfying the equation
$n = 2x^2 + y^2 + 8z^2$ is just twice the number of integer triples $(x, y, z)$ satisfying 
$n =2x^2 + y^2 + 32z^2.$
\item[(b)]  If $n$ is even, then the number of integer triples $(x, y, z)$ satisfying the equation
$\frac{n}{2}= 4x^2 + y^2 + 8z^2$  is just twice the number of integer triples $(x, y, z)$ satisfying
$\frac{n}{2}=4x^2 + y^2 + 32z^2.$
\end{itemize}

A Heronian triangle is a triangle having the property that the lengths of its sides and its area are positive integers. There are several open problems concerning the existence of Heronian
triangles with certain properties. 
It is not known whether there exist Heronian triangles having the property that the lengths of
all their medians are positive integers \cite{Guy1994}, and it is not known whether there exist Heronian triangles having the property that the lengths of all their sides are Fibonacci numbers \cite{HaKeRo}.
Ga\'al, J\'ar\'asi and Luca \cite{GaJaLu} proved that there are only finitely many Heronian triangles whose sides $a,b,c\in S$ and are reduced, that is $\gcd(a,b,c)=1,$ where $S$ denotes the set of integers divisible only by some fixed primes.

Petulante and Kaja \cite{PeKa} gave arguments for parametrizing all integer-sided triangles that contain a specified angle with rational cosine. It is equivalent to determining a rational parametrization of the conic $u^2-2\alpha uv+v^2= 1,$ where $\alpha$ is the rational cosine.

The present paper is motivated by the following two problems due to Zolt\'an Bertalan. 
\begin{enumerate}
 \item[(i)] How to choose $x$ and $y$ such that the distances of the clock hands at 2  o'clock and 3 o'clock are integers? 
\item[(ii)] How to choose $x$ and $y$ such that the distances of the clock hands at 2  o'clock and 4 o'clock are integers? 
\end{enumerate}

We generalize and reformulate the above questions as follows. For given $0<\alpha,\beta<\pi$ we are looking for pairs of triangles in which the length of the sides ($z_{\alpha},z_{\beta}$) opposite the angles $\alpha,\beta$ are from some given number field $\mathbb{Q}(\theta)$ and the length of the other two sides ($x,y$) are rational integers. Let $\varphi_1=\cos(\alpha)$ and $\varphi_2=\cos(\beta).$

\begin{center}
\includegraphics[height=3.5cm]{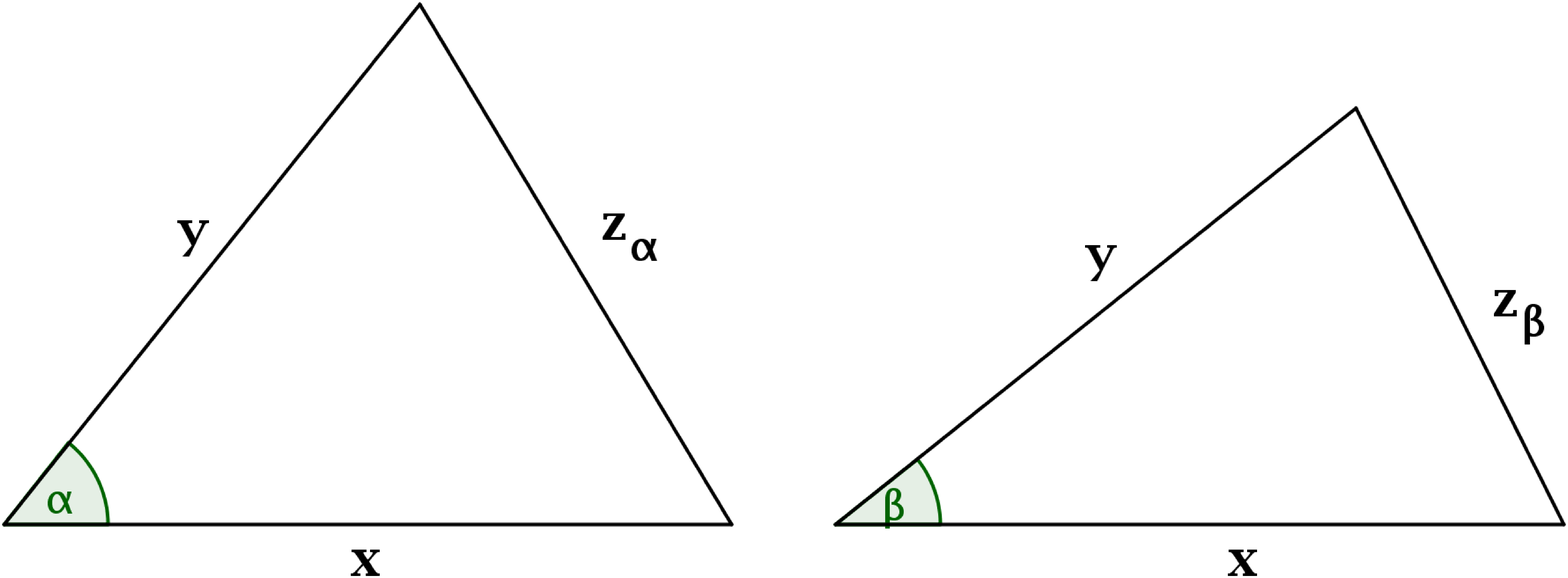}
 % tri.eps: 7274612x7077999 pixel, 300dpi, 61591.71x59927.06 cm, bb=14 14 970 586
\end{center}

By means of the law of cosine we obtain the following systems of equations
\begin{eqnarray*}
&& x^2-2\varphi_1xy+y^2=z_{\alpha}^2,\\
&& x^2-2\varphi_2xy+y^2=z_{\beta}^2,
\end{eqnarray*}
After multiplying these equations and dividing by $y^4$ we get
{\small
$$
\mathcal{C}_{\alpha,\beta}: X^4-2(\varphi_1+\varphi_2)X^3+(4\varphi_1\varphi_2+2)X^2-2(\varphi_1+\varphi_2)X+1=Y^2,
$$
}
where $X=x/y$ and $Y=z_{\alpha}z_{\beta}/y^2.$
Suppose $\varphi_1,\varphi_2\in\mathbb{Q}(\theta)$ for some algebraic number $\theta.$ 
Clearly, the hyperelliptic curve $\mathcal{C}_{\alpha,\beta}$ has a rational point $(X,Y)=(0,1),$ so it is isomorphic to an elliptic curve $\mathcal{E}_{\alpha,\beta}$. The rational points of an elliptic curve form a finitely generated group. We are looking for points on $\mathcal{E}_{\alpha,\beta}$ for which the first coordinate of its preimage is rational. If $\mathcal{E}_{\alpha,\beta}$ is defined over $\mathbb{Q}$ and the rank is 0, then there are only finitely many solutions, if the rank is greater than 0, then there are infinitely many solutions. If the elliptic curve $\mathcal{E}_{\alpha,\beta}$ is defined over some number field of degree at least two, then one can apply the so-called elliptic Chabauty method (see \cite{NB1,NB2} and the references given there) to determine all solutions with the required property.

\section{curves defined over $\mathbb{Q}$}

\subsection{$(\alpha,\beta)=(\pi/3,\pi/2)$}
The system of equations in this case is
\begin{eqnarray*}
&&x^2-xy+y^2=z_{\pi/3}^2,\\
&&x^2+y^2=z_{\pi/2}^2.\\ 
\end{eqnarray*}
The related hyperelliptic curve is $\mathcal{C}_{\pi/3,\pi/2}.$
\begin{thm}
There are infinitely many rational points on $\mathcal{C}_{\pi/3,\pi/2}.$
\end{thm}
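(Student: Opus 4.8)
The plan is to specialize the general construction and then reduce the statement to a rank computation on an elliptic curve. First I would substitute the values $\varphi_1=\cos(\pi/3)=\tfrac12$ and $\varphi_2=\cos(\pi/2)=0$ into the defining equation of $\mathcal{C}_{\alpha,\beta}$. This collapses the quartic to
$$
\mathcal{C}_{\pi/3,\pi/2}:\ Y^2=X^4-X^3+2X^2-X+1,
$$
and one notices the pleasant factorization $X^4-X^3+2X^2-X+1=(X^2+1)(X^2-X+1)$, which simply records the two original cosine equations. In particular the curve carries the obvious rational point $(X,Y)=(0,1)$.

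Next, as indicated in the introduction, a genus-one curve $Y^2=(\text{quartic})$ with a rational point is $\mathbb{Q}$-isomorphic to its Jacobian. So I would pass to a Weierstrass model $\mathcal{E}_{\pi/3,\pi/2}$, either by the classical birational transformation based at $(0,1)$ or by computing the invariants $I,J$ of the binary quartic and writing the Jacobian as $y^2=x^3-27Ix-27J$. A direct calculation gives $I=13$ and $J=92$, so
$$
\mathcal{E}_{\pi/3,\pi/2}:\ y^2=x^3-351\,x-2484 .
$$
Since $\mathcal{C}_{\pi/3,\pi/2}(\mathbb{Q})\cong\mathcal{E}_{\pi/3,\pi/2}(\mathbb{Q})$, it suffices to prove that the right-hand group is infinite, i.e. that $\mathcal{E}_{\pi/3,\pi/2}$ has positive Mordell--Weil rank.

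The decisive step is therefore to exhibit a rational point of infinite order. A short search locates $P=(24,54)$ on $\mathcal{E}_{\pi/3,\pi/2}$, and I would verify that it is non-torsion: applying the duplication formula yields a point $2P$ whose $x$-coordinate equals $1833/16$, which is not an integer, so by the Nagell--Lutz theorem $P$ cannot be a torsion point. Hence the rank is at least $1$, the group $\mathcal{E}_{\pi/3,\pi/2}(\mathbb{Q})$ is infinite, and transporting the multiples of $P$ back through the birational isomorphism (which omits only finitely many points) produces infinitely many rational points on $\mathcal{C}_{\pi/3,\pi/2}$.

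The only genuinely delicate point is the rank claim: producing a rational point is easy, but one must be sure it has infinite order rather than being torsion. Checking non-integrality of a multiple via Nagell--Lutz, or equivalently reducing modulo a couple of primes of good reduction to bound the torsion subgroup and then observe that $P$ escapes it, settles this cleanly, and I expect no other serious obstacle.
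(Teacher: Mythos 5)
Your proof is correct, and all of your numerical claims check out: with $\varphi_1=\tfrac12$, $\varphi_2=0$ the quartic is indeed $X^4-X^3+2X^2-X+1=(X^2+1)(X^2-X+1)$; the invariants are $I=13$, $J=92$, giving the Jacobian $y^2=x^3-351x-2484$; the point $P=(24,54)$ lies on it ($13824-8424-2484=2916=54^2$); and doubling via the tangent slope $\lambda=1377/108=51/4$ gives $x(2P)=2601/16-48=1833/16\notin\mathbb{Z}$, so Nagell--Lutz (applicable since the model has integral coefficients, and $2P\neq O$ because $y(P)=54\neq 0$) certifies that $P$ has infinite order. However, your route differs from the paper's at the decisive step. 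The paper's entire proof is a citation: it identifies $\mathcal{E}_{\pi/3,\pi/2}$ with curve 192A1 in Cremona's tables, whose Mordell--Weil rank is listed as $1$, and concludes immediately. (Your model $y^2=x^3-351x-2484$ is a non-minimal model of that same curve; its minimal form is $y^2=x^3-x^2-4x-2$ of conductor $192$.) Your argument replaces the table lookup with a self-contained, hand-checkable certificate of positive rank: explicit Jacobian, explicit rational point, and non-torsionness via Nagell--Lutz. What the paper's approach buys is brevity and sharper information (the exact rank, torsion structure, conductor); what yours buys is independence from external computer-verified data and the fact that only ``rank $\geq 1$'' --- which is all the theorem needs --- is ever invoked. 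Both proofs rest on the same structural reduction stated in the paper's introduction, namely that $\mathcal{C}_{\pi/3,\pi/2}$ is $\mathbb{Q}$-isomorphic to its Jacobian because of the rational point $(0,1)$, and your final remark that the birational identification misses only finitely many points correctly completes the transfer of infinitude back to $\mathcal{C}_{\pi/3,\pi/2}$.
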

\begin{proof}
%Clearly, $\mathcal{C}_{-1,0}$ has a rational point $(X,Y)=(0,1),$ so it is isomorphic to an elliptic curve $E_{-1,0}$. The rational points of an elliptic curve form a finitely generated group.
In this case the free rank is 1, as it is given in Cremona's table of elliptic curves \cite{ECTable} (curve 192A1). Therefore there are infinitely many rational points on $\mathcal{C}_{\pi/3,\pi/2}.$
\end{proof}
\begin{cor}
Problem {\rm (i)} has infinitely many solutions.
\end{cor}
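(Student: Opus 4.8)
The plan is to translate Problem~(i) into a statement about rational points on an explicit curve and then feed in Theorem~1. At $2$ o'clock the hands subtend the angle $\pi/3$ and at $3$ o'clock the angle $\pi/2$, so, writing $x,y$ for the two hand-lengths, the required distances are $\sqrt{x^2-xy+y^2}$ and $\sqrt{x^2+y^2}$. Thus Problem~(i) asks for infinitely many integer pairs $(x,y)$ for which \emph{both} $x^2-xy+y^2$ and $x^2+y^2$ are perfect squares. Dividing by $y^2$ and setting $X=x/y$, this is the same as producing infinitely many rational $X$ for which $X^2-X+1$ and $X^2+1$ are simultaneously squares of rationals; clearing denominators afterwards recovers integral $x,y$ together with integral distances, since a rational whose square is an integer is an integer.

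First I would introduce the simultaneous-squares locus
$$
C:\quad u^2=X^2+1,\qquad v^2=X^2-X+1,
$$
which carries the rational point $(X,u,v)=(0,1,1)$. A Riemann--Hurwitz count on the projection $C\to\mathbb{P}^1_X$, a $(\mathbb{Z}/2)^2$-cover branched exactly over the four roots of $X^2+1$ and of $X^2-X+1$, shows that $C$ has genus $1$, so $C$ is an elliptic curve over $\mathbb{Q}$. The essential point is that a rational point of $C$ is precisely a solution of the reformulated problem, whereas a rational point of $\mathcal{C}_{\pi/3,\pi/2}$ only records that the \emph{product} $(X^2+1)(X^2-X+1)$ is a square.

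This mismatch is the main obstacle. Theorem~1 supplies infinitely many points on $\mathcal{C}_{\pi/3,\pi/2}$, but such a point lifts to $C$ only when $X^2+1$ is itself a square, so positivity of rank must be transported to the finer curve $C$. I would achieve this through the degree-two map $\pi\colon C\to\mathcal{C}_{\pi/3,\pi/2}$, $(X,u,v)\mapsto(X,uv)$: the same Riemann--Hurwitz bookkeeping shows $\pi$ is unramified, hence an isogeny, so $C$ and $\mathcal{C}_{\pi/3,\pi/2}$ have equal Mordell--Weil rank. By Theorem~1 (curve $192A1$) this common rank equals $1$.

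Finally, since $C$ is an elliptic curve over $\mathbb{Q}$ of rank $1$, the group $C(\mathbb{Q})$ is infinite; as the coordinate $X$ takes each value at only finitely many points, these rational points realise infinitely many distinct ratios $X=x/y$, hence infinitely many non-similar triangles, and after rescaling to integers each is a solution of Problem~(i). As an explicit witness I would record $(x,y)=(8,15)$, for which $x^2+y^2=17^2$ and $x^2-xy+y^2=13^2$, corresponding to the point $(X,u,v)=(8/15,17/15,13/15)\in C(\mathbb{Q})$; checking that this point has infinite order (equivalently, that it maps to a non-torsion point of $\mathcal{C}_{\pi/3,\pi/2}$) already forces infinitely many solutions on its own.
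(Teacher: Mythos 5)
Your proof is correct, and it takes a genuinely different route from the paper. The paper gives no explicit argument for the Corollary: it follows Theorem~1 immediately, resting on the unstated principle that every rational point of $\mathcal{C}_{\pi/3,\pi/2}$ with $X=x/y$ in lowest terms already yields a solution of Problem~(i). The implicit justification is elementary coprimality: a common prime factor of $x^2+y^2$ and $x^2-xy+y^2$ would divide their difference $xy$, hence divide both $x$ and $y$; so these are coprime positive integers whose product is a perfect square, whence \emph{each} factor is a perfect square and both distances are integers. You identify exactly this ``product is a square'' versus ``each factor is a square'' mismatch, but repair it geometrically instead: you pass to the fiber product $C$ on which the two quadratics are squares separately, compute by Riemann--Hurwitz that $C$ has genus $1$, and show that the forgetful map $(X,u,v)\mapsto(X,uv)$ is an unramified degree-$2$ cover carrying $(0,1,1)$ to $(0,1)$, hence a $2$-isogeny, so that $C$ inherits rank $1$ from curve 192A1; its infinitely many rational points then give infinitely many admissible ratios $X$. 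The coprimality route is shorter and wholly elementary; your isogeny route is heavier but buys generality: it shows that for every pair $(\alpha,\beta)$ treated in the paper the simultaneous-squares curve has the same rank as $\mathcal{C}_{\alpha,\beta}$, which is precisely what the paper's blanket claim ``if the rank is greater than 0, then there are infinitely many solutions'' silently assumes. Two caveats: your closing suggestion of verifying that the point above $(8,15)$ is non-torsion is left unchecked, so your proof rests entirely on the isogeny argument (which is fine); and, strictly speaking, both your argument and the paper's should note that infinitely many of the resulting $X$ are positive (so that $x$ and $y$ can be taken to be positive integers), which holds because the rational points are dense in a real component of the curve and $X$ maps each real component onto all of $\mathbb{P}^1(\mathbb{R})$.
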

Few solutions are given in the following table.
\begin{center}
% use packages: array
\begin{table}[h]
\begin{tabular}{|c|c|c|c|}
\hline
$x$ & $y$ & $z_{\pi/3}$ & $z_{\pi/2}$ \\ 
\hline
8 & 15 & 13 & 17 \\ 
\hline
1768 & 2415  & 2993 & 3637 \\
\hline
10130640 & 8109409 & 9286489 & 12976609 \\
\hline
498993199440 & 136318711969 & 517278459169 & 579309170089\\
\hline
\end{tabular}
\end{table}
\end{center}

\subsection{$(\alpha,\beta)=(\pi/2,2\pi/3)$}

The system of equations in this case is
\begin{eqnarray*}
&&x^2+y^2=z_{\pi/2}^2,\\ 
&&x^2+xy+y^2=z_{2\pi/3}^2.
\end{eqnarray*}
The hyperelliptic curve $\mathcal{C}_{\pi/2,2\pi/3}$ is isomorphic to $\mathcal{C}_{\pi/3,\pi/2},$ therefore there are infinitely many rational points on $\mathcal{C}_{\pi/2,2\pi/3}.$

\subsection{$(\alpha,\beta)=(\pi/3,2\pi/3)$}

We have
\begin{eqnarray*}
&&x^2-xy+y^2=z_{\pi/3}^2,\\ 
&&x^2+xy+y^2=z_{2\pi/3}^2.
\end{eqnarray*}
After multiplying these equations we get
\begin{equation}\label{Mor}
x^4+x^2y^2+y^4=(z_{\pi/3}z_{2\pi/3})^2.
\end{equation}
\begin{thm}
If $(x,y)$ is a solution of \eqref{Mor} such that $\gcd(x,y)=1,$ then $xy=0.$
\end{thm}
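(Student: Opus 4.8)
The plan is to convert the hyperelliptic model into a single elliptic curve over $\mathbb{Q}$ and to show that this curve has rank $0$, so that its only rational points are forced torsion points, all of which give $xy=0$. First I would dehomogenise \eqref{Mor}: writing $u=x/y$ and $v=z_{\pi/3}z_{2\pi/3}/y^2$ (treating $y=0$ separately, which already yields $xy=0$), equation \eqref{Mor} becomes the quartic $v^2=u^4+u^2+1$. Since both the leading and constant coefficients are squares, this genus-$1$ curve has rational points and admits a Weierstrass model. Concretely I would set $U=u^2$, so that $v^2=U^2+U+1$ is a conic; introducing the parameter $s=v-U$ and solving gives $U=\frac{1-s^2}{2s-1}=\frac{(1-s^2)(2s-1)}{(2s-1)^2}$, so that $U$ is a rational square exactly when $W^2=(1-s^2)(2s-1)$ for a suitable $W$, with $u=\frac{W}{2s-1}$. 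The substitution $X=-2s$, $Y=-2W$ then produces
$$
E:\quad Y^2=X^3+X^2-4X-4=(X-2)(X+2)(X+1),
$$
together with the recovery formula $u=\frac{Y}{2(X+1)}$.

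Next I would analyse $E$. It has full rational $2$-torsion $\{O,(2,0),(-2,0),(-1,0)\}$, and by the standard halving criterion (a point $(e,0)$ lies in $2E(\mathbb{Q})$ only if the differences of $e$ with the other two roots are both squares) one checks that none of the three nontrivial $2$-torsion points is divisible by $2$; hence the torsion subgroup is precisely $(\mathbb{Z}/2\mathbb{Z})^2$. The decisive input is that $E$ has rank $0$, which I would read off from Cremona's tables \cite{ECTable} (or confirm by a $2$-descent), exactly as in the previous subsections. Granting this, $E(\mathbb{Q})=(\mathbb{Z}/2\mathbb{Z})^2$.

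Finally I would translate back. Under $u=\frac{Y}{2(X+1)}$ the four rational points of $E$ give $u=0$ (from $O$ and the two points with $Y=0$, $X\neq-1$) or $u=\infty$ (from $(-1,0)$), i.e. $x=0$ or $y=0$. Conversely, any coprime solution of \eqref{Mor} with $xy\neq0$ would produce a rational point on $E$ whose image $u=x/y\notin\{0,\infty\}$ forces $X\notin\{2,-2,-1\}$, so it would be neither $O$ nor $2$-torsion, contradicting rank $0$. Hence $xy=0$.

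I expect the one genuine obstacle to be the rank computation: the rest is a formal birational change of coordinates, but the statement is in fact \emph{equivalent} to $\operatorname{rank}E=0$, so the heart of the matter is the descent establishing it. A secondary point needing care is bookkeeping in the birational map, since $u\mapsto u^2$ is two-to-one and the quartic has two points at infinity; I would verify explicitly that no rational point of the quartic is lost and that the four points of $E(\mathbb{Q})$ account for all of them. As a purely elementary alternative I would instead factor $x^4+x^2y^2+y^4=(x^2-xy+y^2)(x^2+xy+y^2)$, observe that the two factors are coprime when $\gcd(x,y)=1$ so that each is a perfect square, and run a Fermat descent on the pair $x^2\pm xy+y^2=\square$; there the difficulty is arranging the descent to strictly decrease, using unique factorisation in $\mathbb{Z}[\omega]$ to control the prime $3$.
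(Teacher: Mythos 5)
Your proposal is correct in substance but takes a genuinely different route from the paper: the paper disposes of this theorem in a single line, citing Mordell's book (page 19), where $x^4+x^2y^2+y^4=z^2$ is shown by classical Fermat-style descent to have no coprime solutions with $xy\neq 0$. You instead make the reduction to an elliptic curve explicit, and your algebra checks out: the quartic $v^2=u^4+u^2+1$ is birational over $\mathbb{Q}$ to $E:\,Y^2=(X-2)(X+2)(X+1)$ via $u=\frac{Y}{2(X+1)}$, this curve (conductor $48$) does have rank $0$ and Mordell--Weil group $(\mathbb{Z}/2\mathbb{Z})^2$, and the four rational points all map to $u\in\{0,\infty\}$, i.e.\ $xy=0$. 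Your route is actually more in keeping with the methodology of the rest of the paper (ranks read from Cremona's tables, Chabauty elsewhere) than the paper's own one-line citation; what the citation buys is a self-contained elementary proof by descent, while what your version buys is uniformity with the paper's framework and the explicit observation, which you rightly emphasize, that the theorem is equivalent to $\operatorname{rank}E(\mathbb{Q})=0$. Your fallback sketch (factoring into $x^2\pm xy+y^2$, noting the factors are coprime and odd, hence each a square, then descending) is essentially Mordell's own argument, so the two approaches reconnect there.

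Two small slips, both harmless. First, the halving criterion rules out rational points of order $4$ (hence torsion $\mathbb{Z}/2\times\mathbb{Z}/4$ or $\mathbb{Z}/2\times\mathbb{Z}/8$) but says nothing about points of order $3$, so $\mathbb{Z}/2\times\mathbb{Z}/6$ is not excluded by that argument alone; the cheapest fix is reduction modulo $5$ (good reduction, $\#E(\mathbb{F}_5)=8$), or simply reading the torsion from the same table as the rank. Second, in the back-translation the point at infinity $O$ corresponds to $u=\infty$, not $u=0$: the four points of $E(\mathbb{Q})$ split as $(\pm 2,0)\leftrightarrow(u,v)=(0,\mp 1)$ and $\{O,(-1,0)\}\leftrightarrow$ the two points at infinity of the quartic. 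Neither slip affects the conclusion, because the direction of the argument that carries the proof --- a coprime solution with $xy\neq 0$ gives an affine point with $u\neq 0$, hence $Y\neq 0$, hence a rational point of $E$ outside $\{O\}\cup E(\mathbb{Q})[2]$, contradicting rank $0$ --- is airtight as you state it.
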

\begin{proof}
See \cite{Mo} at page 19.
\end{proof}
\begin{cor}
Problem {\rm (ii)} has no solution.
\end{cor}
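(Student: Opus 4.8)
The plan is to prove the equivalent arithmetic statement: if $x,y,z$ are integers with $\gcd(x,y)=1$ and $x^4+x^2y^2+y^4=z^2$, then $xy=0$. I would proceed by Fermat-style infinite descent: assume for contradiction that a solution with $xy\neq 0$ exists, choose one minimizing $x^2+y^2$ (equivalently $|z|$), and then manufacture a strictly smaller coprime solution of the same shape, contradicting minimality.

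First I would exploit the factorization. Since $x^4+x^2y^2+y^4=(x^2+y^2)^2-(xy)^2$, the equation is the Pythagorean relation $z^2+(xy)^2=(x^2+y^2)^2$; equivalently one may use $x^4+x^2y^2+y^4=(x^2-xy+y^2)(x^2+xy+y^2)$. I would then control the arithmetic of the factors. A short computation shows $\gcd(xy,\,x^2+y^2)=1$ whenever $\gcd(x,y)=1$ (a common odd prime dividing $xy$ would divide $x$ or $y$, hence both), so the triple $(xy,\,z,\,x^2+y^2)$ is primitive; equivalently $\gcd(x^2-xy+y^2,\,x^2+xy+y^2)=1$. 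A parity check then rules out the case $x,y$ both odd: there $x^2+y^2\equiv 2\pmod 4$, so $z^2=(x^2+y^2)^2-(xy)^2\equiv 4-1\equiv 3\pmod 8$, which no square attains. Hence exactly one of $x,y$ is even and $xy$ is the even leg.

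Next I would invoke the standard parametrization of primitive Pythagorean triples: there are coprime integers $m>n>0$ of opposite parity with $xy=2mn$, $z=m^2-n^2$, and $x^2+y^2=m^2+n^2$. (Equivalently, $x^2-xy+y^2$ and $x^2+xy+y^2$ are coprime with square product, hence are squares $u^2,v^2$, and $m=(u+v)/2$, $n=(v-u)/2$.) This leaves the auxiliary system $x^2+y^2=m^2+n^2$ together with $xy=2mn$ in strictly smaller unknowns.

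The descent itself is the genuinely hard step and the one I expect to be the main obstacle. The goal is to extract from this auxiliary system a new coprime pair $(X,Y)$, smaller than $(x,y)$, for which $X^4+X^2Y^2+Y^4$ is again a perfect square, producing an infinite strictly decreasing sequence of positive integers, which is impossible. Concretely I would factor $x^2+y^2=m^2+n^2$ in the Gaussian integers $\mathbb{Z}[i]$ (a UFD), writing $x+iy$ and $m+in$ through a common divisor and using the equal-norm condition together with the constraint $xy=2mn$ to express the variables as quadratic forms in smaller integers. The delicate point is translating $xy=2mn$ faithfully through this parametrization while keeping track of units and signs, so that the smaller pair indeed satisfies an equation of the original type rather than a spurious variant (the appearance of the factor involving the ramified prime must be handled carefully). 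Once a strictly smaller coprime solution with the product of its entries nonzero is produced, minimality is contradicted, forcing $xy=0$; this is exactly the assertion and yields the Corollary that Problem (ii) is unsolvable. I would finally cross-check the bookkeeping against Mordell's treatment cited above.
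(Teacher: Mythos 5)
Your preparatory steps are all correct: the identity $x^4+x^2y^2+y^4=(x^2+y^2)^2-(xy)^2$, the primitivity of the triple $(xy,\,z,\,x^2+y^2)$ when $\gcd(x,y)=1$, the mod~$8$ argument excluding $x,y$ both odd, and the Pythagorean parametrization $xy=2mn$, $z=m^2-n^2$, $x^2+y^2=m^2+n^2$. But the argument stops exactly where the real proof begins, and you say so yourself: the descent step is only announced (``I would factor \dots the delicate point is \dots''), never carried out. This is a genuine gap, not bookkeeping. In fact the natural eliminations from your auxiliary system do \emph{not} return a solution of the original equation: from $x^2+y^2=m^2+n^2$ and $xy=2mn$ one gets $(x\pm y)^2=m^2+n^2\pm 4mn$, hence $(x^2-y^2)^2=m^4-14m^2n^2+n^4$, a \emph{different} quartic form; while running your construction backwards through $u^2=x^2-xy+y^2$, $v^2=x^2+xy+y^2$, $m=(u+v)/2$, $n=(v-u)/2$ merely gives $(m+n)^2=x^2+xy+y^2$ and $(m-n)^2=x^2-xy+y^2$, i.e.\ you recover verbatim the statement you started from. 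So ``a strictly smaller coprime solution of the same shape'' is precisely what has not been produced; and if one writes $x+iy=\gamma_1\gamma_2$, $m+in=\gamma_1\overline{\gamma_2}$ in $\mathbb{Z}[i]$ to exploit the equal norms, the side condition $xy=2mn$ becomes a nontrivial bilinear constraint on the Gaussian parameters (of the type $3gh(e^2-f^2)=ef(g^2-h^2)$), not a copy of the original equation. Without closing this loop there is no contradiction and no theorem.

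For comparison, the paper does not attempt any of this: the Theorem (coprime solutions of \eqref{Mor} have $xy=0$) is simply quoted from Mordell's book, and the Corollary follows at once. Two further remarks. First, your ``equivalent arithmetic statement'' needs the one-line reduction from Problem (ii) to the coprime case: if $d=\gcd(x,y)$, then $d^4\mid z^2$, so $(x/d,y/d,z/d^2)$ is a coprime solution; Problem (ii) itself does not assume coprimality. Second, if you do want a self-contained proof, the classical route is to use the coprime factorization $x^2+xy+y^2=v^2$, $x^2-xy+y^2=u^2$, parametrize the Eisenstein conic $x^2+xy+y^2=v^2$, and substitute into the second equation, descending eventually to one of the standard impossible equations; that is essentially the argument behind the citation, and it is exactly the part your proposal leaves open.
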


In the following sections we use the so-called elliptic Chabauty's method (see \cite{NB1},
\cite{NB2}) to determine all points on the curves $\mathcal{C}_{\alpha,\beta}$ for which $X$
is rational. The algorithm is implemented by N. Bruin in MAGMA \cite{MAGMA}, so here
we indicate the main steps only, the actual computations can be carried
out by MAGMA.

\section{curves defined over $\mathbb{Q}(\sqrt{2})$}
\subsection{$(\alpha,\beta)=(\pi/4,\pi/2)$}
The hyperelliptic curve $\mathcal{C}_{\pi/4,\pi/2}$ is isomorphic to
$$
\mathcal{E}_{\pi/4,\pi/2}:\quad v^2=u^3-u^2-3u-1.
$$
The rank of $\mathcal{E}_{\pi/4,\pi/2}$ over $\mathbb{Q}(\sqrt{2})$ is 1, which is less than the degree of $\mathbb{Q}(\sqrt{2}).$ Applying elliptic Chabauty (the procedure "Chabauty" of MAGMA) with
$p=7$, we obtain that $(X,Y)=(0,\pm 1)$ are the only affine points on $\mathcal{C}_{\pi/4,\pi/2}$ with rational first coordinates. Since $X=x/y$ we get that there does not exist appropriate triangles in this case.

\subsection{$(\alpha,\beta)=(\pi/4,\pi/3)$}
The hyperelliptic curve $\mathcal{C}_{\pi/4,\pi/3}$ is isomorphic to
$$
\mathcal{E}_{\pi/4,\pi/3}:\quad v^2=u^3 + (\sqrt{2} - 1)u^2 - 2u - \sqrt{2}.
$$
The rank of $\mathcal{E}_{\pi/4,\pi/2}$ over $\mathbb{Q}(\sqrt{2})$ is 1 and applying elliptic Chabauty's method again with
$p=7$, we obtain that $(X,Y)=(0,\pm 1)$ are the only affine points on $\mathcal{C}_{\pi/4,\pi/3}$ with rational first coordinates. As in the previous case we obtain that there does not exist triangles satisfying the appropriate conditions.

\section{curves defined over $\mathbb{Q}(\sqrt{3})$ and $\mathbb{Q}(\sqrt{5})$}
In the following tables we summarize some details of the computations, that is the pair $(\alpha,\beta),$ the equations of the elliptic curves $\mathcal{E}_{\alpha,\beta},$ the rank of the Mordell-Weil group of these curves over the appropriate number field ($\mathbb{Q}(\sqrt{3})$ or $\mathbb{Q}(\sqrt{5})$), the rational first coordinates of the affine points and the primes we used.
\begin{center}
% use packages: array
\begin{tabular}{|c|l|c|c|c|}
\hline
$(\alpha,\beta)$ & \hspace{80pt} $\mathcal{E}_{\alpha,\beta}$ & Rank & $X$ & $p$ \\ 
\hline
$(\pi/6,\pi/2)$ & $v^2 = u^3 - u^2 - 2u$ & 1 & $\{0,\pm 1\}$ & 5 \\ 
$(\pi/6,\pi/3)$ & $v^2 = u^3+(\sqrt{3}-1)u^2-u+(-\sqrt{3}+1)$ & 1 & $\{0\}$ & 7 \\ 
$(\pi/5,\pi/2)$ & $v^2=u^3-u^2+1/2(\sqrt{5}-7)u+1/2(\sqrt{5}-3)$& 1 & $\{0\}$ & 13 \\ 
$(\pi/5,\pi/3)$ & $v^2=u^3+1/2(\sqrt{5}-1)u^2+1/2(\sqrt{5}-5)u-1$& 1 & $\{0, 1\}$ & 13 \\ 
$(\pi/5,2\pi/5)$& $v^2=u^3-2u-1$& 1 & $\{0\}$ & 7\\
$(\pi/5,4\pi/5)$& $v^2=u^3+1/2(-\sqrt{5}+1)u^2-4u+(2\sqrt{5}-2)$& 0 & $\{0\}$ & -\\
\hline
\end{tabular}
\end{center}
In case of $(\alpha,\beta)=(\pi/5,\pi/3)$ we get the following family of triangles given by 
the length of the sides 
$$(x,y,z_{\alpha})=\left(t,t,\frac{-1+\sqrt{5}}{2}t\right) \mbox{ and } (x,y,z_{\beta})=(t,t,t),$$ where $t\in\mathbb{N}.$

\bibliography{all}

\begin{thebibliography}{10}

\bibitem{MAGMA}
W.~Bosma, J.~Cannon, and C.~Playoust.
\newblock The {M}agma algebra system. {I}. {T}he user language.
\newblock {\em J. Symbolic Comput.}, 24(3-4):235--265, 1997.
\newblock Computational algebra and number theory (London, 1993).

\bibitem{NB1}
N.~R. Bruin.
\newblock {\em Chabauty methods and covering techniques applied to generalized
  {F}ermat equations}, volume 133 of {\em CWI Tract}.
\newblock Stichting Mathematisch Centrum Centrum voor Wiskunde en Informatica,
  Amsterdam, 2002.
\newblock Dissertation, University of Leiden, Leiden, 1999.

\bibitem{NB2}
Nils Bruin.
\newblock Chabauty methods using elliptic curves.
\newblock {\em J. Reine Angew. Math.}, 562:27--49, 2003.

\bibitem{ECTable}
J.~E. Cremona.
\newblock {\em Algorithms for modular elliptic curves}.
\newblock Cambridge University Press, New York, NY, USA, 1992.

\bibitem{GaJaLu}
I.~Ga{\'a}l, I.~J{\'a}r{\'a}si, and F.~Luca.
\newblock A remark on prime divisors of lengths of sides of {H}eron triangles.
\newblock {\em Experiment. Math.}, 12(3):303--310, 2003.

\bibitem{Guy1994}
Richard~K. Guy.
\newblock {\em Unsolved problems in number theory}.
\newblock Problem Books in Mathematics. Springer-Verlag, New York, second
  edition, 1994.
\newblock Unsolved Problems in Intuitive Mathematics, I.

\bibitem{HaKeRo}
H.~Harborth, A.~Kemnitz, and N.~Robbins.
\newblock Non-existence of {F}ibonacci triangles.
\newblock {\em Congr. Numer.}, 114:29--31, 1996.
\newblock Twenty-fifth Manitoba Conference on Combinatorial Mathematics and
  Computing (Winnipeg, MB, 1995).

\bibitem{Mo}
L.~J. Mordell.
\newblock {\em Diophantine equations}.
\newblock Pure and Applied Mathematics, Vol. 30. Academic Press, London, 1969.

\bibitem{PeKa}
N.~Petulante and I.~Kaja.
\newblock How to generate all integral triangles containing a given angle.
\newblock {\em Int. J. Math. Math. Sci.}, 24(8):569--572, 2000.

\bibitem{Tunnell}
J.~B. Tunnell.
\newblock A classical {D}iophantine problem and modular forms of weight
  {$3/2$}.
\newblock {\em Invent. Math.}, 72(2):323--334, 1983.

\end{thebibliography}
\end{document}